\let\csname equation*\endcsname\relax
\let\csname endequation*\endcsname\relax
\journal{Stochastics and Dynamics}
\begin{document}
\newtheorem{definition}{Definition}[section]
\newtheorem{lemma}{Lemma}[section]
\newtheorem{remark}{Remark}[section]
\newtheorem{theorem}{Theorem}[section]
\newtheorem{proposition}{Proposition}
\newtheorem{assumption}{Assumption}
\newtheorem{example}{Example}
\newtheorem{corollary}{Corollary}[section]
\def\ep{\varepsilon}
\def\Rn{\mathbb{R}^{n}}
\def\Rm{\mathbb{R}^{m}}
\def\E{\mathbb{E}}
\def\hte{\hat\theta}
\renewcommand{\theequation}{\thesection.\arabic{equation}}
\begin{frontmatter}

\title{Effective dynamics of interfaces for nonlinear SPDEs
\\
driven by multiplicative white noise}

\author{Shenglan Yuan\corref{cor1}\fnref{addr1}}\ead{shenglanyuan@gbu.edu.cn}\cortext[cor1]{Corresponding author}
\author{Dirk Bl$\rm\ddot{o}$mker\fnref{addr2}}\ead{dirk.bloemker@math.uni-augsburg.de}

\address[addr1]{\rm Department of Mathematics, School of Sciences, Great Bay University, Dongguan 523000, China}
\address[addr2]{\rm Institut f$\rm\ddot{u}$r Mathematik, Universit$\rm\ddot{a}$t Augsburg,
86135, Augsburg, Germany}

\begin{center}
\text{Dedicated to Professor Boling Guo on the occasion of his 90th birthday}
\end{center}

\begin{abstract}
In the present work, we investigate the dynamics of the infinite-dimensional stochastic partial differential equation (SPDE) with multiplicative white noise. We derive the effective equation on the approximate slow manifold in detail by utilizing a finite-dimensional stochastic differential equation (SDE) describing the motion of interfaces. In particular, we verify the equivalence between the full SPDE and the coupled system under small stochastic perturbations. Moreover, we apply our results to effective dynamics of stochastic models with multiplicative white noise, illustrated with four examples on the stochastic damped wave equation, the stochastic Allen-Cahn equation, the stochastic nonlinear Schr\"{o}dinger equation and the stochastic Swift-Hohenberg equation.
\end{abstract}

\begin{keyword}
Effective dynamics; Stochastic differential equations; Multiplicative white noise.

\emph{2020 Mathematics Subject Classification}: 60H15, 35R60

\end{keyword}

\end{frontmatter}

\section{Introduction}
The field of effective dynamics of interfaces for nonlinear SPDEs driven by multiplicative white noise lies at the intersection of stochastic analysis, nonlinear partial differential equations, and mathematical physics, focusing on understanding how multiplicative noise influences the motion, stability, and emergent behavior of interfaces (e.g., phase boundaries, propagating fronts, or domain walls) in spatially extended systems \cite{BD07}. For the nonlinear SPDEs with multiplicative noise, the noise is state-dependent, leading to  It\^{o} or Stratonovich interpretations (critical for preserving physical properties like energy balance). The nonlinear coupling between noise and solution, causing phenomena like noise-induced transitions or symmetry breaking \cite{BHP}.

The study of effective dynamics of interfaces for nonlinear SPDEs driven by multiplicative white noise has profound applications across diverse scientific domains \cite{C}. In materials science, these models capture phase separation processes in alloys, where multiplicative noise accounts for thermal fluctuations and heterogeneous material properties, enabling precise predictions of microstructural evolution and grain boundary dynamics under stochastic forcing. In fluid dynamics, such SPDEs describe stochastic fingering instabilities in multiphase flows, where multiplicative noise models spatially correlated perturbations in viscosity or pressure, offering insights into pattern formation in porous media or oil recovery processes. In biology, these equations are instrumental in analyzing cell membrane fluctuations driven by thermal or active noise, as well as population front propagation in stochastic environments, where multiplicative noise reflects spatially varying resource availability or predation risks. For climate modeling, effective interface dynamics governed by SPDEs with multiplicative noise elucidate stochastic ice-line transitions during paleoclimate shifts, capturing how random climatic forcings drive abrupt transitions between glacial and interglacial states. By reducing these multiscale systems to effective equations for interface motion, researchers gain predictive power over rare events, metastability, and critical transitions, bridging mathematics to real-world phenomena in engineering, ecology, and Earth systems.

Analyzing interface dynamics in SPDEs with multiplicative noise requires advanced probabilistic frameworks to address both regularity and noise-driven complexity \cite{H}. Stochastic calculus provides foundational tools such as It\^{o} and Stratonovich integrals, which govern the evolution of multiplicative noise in SPDEs, with Stratonovich calculus often preferred for geometric interpretations of noise in curved manifolds. Martingale problems offer a weak solution framework to characterize interface motion, linking stochastic dynamics to generator-based descriptions of Markov processes. Malliavin calculus enables sensitivity analysis by differentiating stochastic systems with respect to noise realizations, critical for quantifying how multiplicative perturbations influence interface stability or bifurcations. In regimes with rough or singular noise (e.g., spatially irregular or fractional), regularity structures and paracontrolled distributions rigorously renormalize divergent terms arising from nonlinear interactions between noise and singular solutions, ensuring well-posedness in high-dimensional or fractal geometries. These tools collectively resolve ambiguities in stochastic products and underpin convergence proofs for dimension-reduced effective models. Unifying these approaches can tackle multiscale phenomena such as stochastic front propagation in random media, crack path selection in fracture mechanics, or noise-induced pattern transitions in active matter, advancing predictive modeling of interfaces across physics, biology, and materials science.

Numerically resolving interface dynamics in SPDEs with multiplicative noise demands tailored strategies to balance accuracy, efficiency, and scalability \cite{ACLW,KW}. Stochastic discretization employs adaptive schemes, such as adaptive mesh refinement or stochastic finite elements, to resolve sharp interfaces while approximating multiplicative noise via Wiener chaos expansions or Malliavin calculus. These methods must carefully handle spatially correlated noise and singularities inherent to SPDEs, often requiring hybrid techniques to manage computational costs. Monte Carlo methods leverage statistical sampling of interface trajectories to quantify uncertainty, utilizing variance reduction techniques (e.g., importance sampling, multilevel Monte Carlo) to mitigate the curse of dimensionality and capture rare events like topological transitions or metastable jumps. Coarse-graining focuses on simulating reduced effective equations-such as SDEs for interface position or curvature-driven flows-derived via dimension reduction or homogenization. This approach benefits from structure-preserving integrators (e.g., geometric or symplectic schemes) to maintain conservation laws and stability. Key challenges include ensuring consistency between coarse-grained models and the original SPDEs, addressing noise-induced numerical stiffness, and developing data-driven methods (e.g., neural operators) to learn effective dynamics from sparse or noisy experimental data. Bridging these numerical frameworks with applications in materials science (e.g., crack propagation) or fluid dynamics (e.g., droplet breakup) requires scalable algorithms for high-dimensional systems and rigorous validation against both synthetic and experimental benchmarks \cite{KS}.

The analysis of interface dynamics in multiplicative-noise-driven SPDEs intersects with diverse disciplines, each contributing unique tools and perspectives \cite{FW}. Stochastic geometry provides frameworks to quantify topological changes (e.g., interface merging, splitting, or pinch-off) in evolving manifolds under noise, leveraging concepts like random set theory and percolation thresholds to characterize geometric transitions. Statistical mechanics bridges microscopic noise-driven fluctuations with macroscopic interface behavior, linking metastable states and nonequilibrium phase transitions to energy landscapes and entropy production rates in SPDEs. Control theory informs strategies to stabilize interfaces against multiplicative noise, employing feedback mechanisms or stochastic Lyapunov functions to suppress undesired instabilities while preserving desired morphologies. Machine learning accelerates the discovery of effective interface models \cite{LYX,LYLL} via data-driven techniques such as neural operators or physics-informed neural networks, which learn coarse-grained dynamics from high-dimensional SPDE simulations or experimental datasets. These fields synergize to address challenges like predicting fracture paths in disordered materials, controlling biofilm expansion under environmental noise, or optimizing interface shapes in stochastic optimal control. Integrating geometric intuition, thermodynamic principles, adaptive control, and data science, can unlock robust, cross-disciplinary insights into how multiplicative noise shapes interfacial phenomena across scales.

Research in this field has evolved over decades. Below is a synthesis of key notable results.
Bl\"{o}mker and Schindler \cite{BS} analyzed the stochastic motion of a droplet governed by a stochastic Cahn-Hilliard equation in the sharp-interface limit under weak noise conditions. They further derived a rigorous stochastic differential equation describing the droplet center's dynamics.
Cartwright and Gottwald \cite{CG} presented a collective coordinate approach to analyze the impact of stochastic perturbations on coherent solitary waves in Korteweg-de Vries equations.
Hairer, L\^{e} and Rosati \cite{HLR} investigated the Allen-Cahn equation with a rapidly mixing Gaussian field as the initial condition. They demonstrated that if the amplitude of this initial condition is not too large, the equation generates interfaces  characterized by the nodal sets of the Bargmann-Fock Gaussian field, which subsequently evolve according to mean curvature flow. Yuan and Bl\"{o}mker \cite{YB} reduced dynamics of stochastic amplitude equations near bifurcation points using formal expansions. Previous research established foundational tools (homogenization, invariant manifolds) and revealed critical phenomena (noise-induced transitions, modified scaling laws). However, challenges persist in handling high dimensions, strong nonlinearities, and non-Gaussian noise, motivating modern work in stochastic analysis and applied mathematics \cite{PZ}.

The central focus of current research lies in reducing SPDE dynamics onto a low-dimensional manifold representing interface motion. This approach is pivotal for analyzing complex stochastic systems by focusing on essential degrees of freedom. It arises in modeling interface dynamics for nonlinear SPDEs with multiplicative noise.

This paper is organized as follows. In Section \ref{E}, we establish an equivalence between the original SPDE and a coupled system comprising a finite-dimensional SDE for the parameter on an approximate slow manifold and an infinite-dimensional evolution equation for the orthogonal component. In Section \ref{ED}, we demonstrate the applicability of our results to the effective dynamics of stochastic models with multiplicative white noise, employing four concrete examples to elucidate their practical implications. In Section \ref{DW}, we analyze the stochastic damped wave equation with multiplicative noise, demonstrating prolonged metastability near a slow manifold.  The reduced dynamics are governed by a finite-dimensional Stratonovich SDE, validated through spectral analysis and numerical simulations. Our results establish that solutions remain close to the slow manifold for exponentially long times with high probability, even under multiplicative noise.
 In Section \ref{ACE}, we analyze metastability and noise-induced transitions in the stochastic Allen-Cahn equation with multiplicative Stratonovich noise. For spatially extended systems with Neumann boundary conditions, front solutions near a slow manifold exhibit exponentially long residence times before transitioning between metastable states. Numerical simulations validate the reduced dynamics, demonstrating rare transitions driven by multiplicative noise.
  In Section \ref{SE}, we analyze the dynamics of solitons in the stochastic nonlinear Schr\"{o}dinger equation  with multiplicative Stratonovich noise. For weak noise, we project the infinite-dimensional system onto a slow manifold corresponding to the soliton's translational mode. The reduced dynamics reveal that the soliton position undergoes a Brownian motion with a noise-induced drift. Numerical simulations confirm the diffusive motion of the soliton while maintaining coherence over long timescales, demonstrating robustness against stochastic perturbations. In Section \ref{SHE}, we investigate metastability in the stochastic Swift-Hohenberg equation  with multiplicative Stratonovich noise. For weak noise near the pattern-forming bifurcation, we project the dynamics onto a slow manifold of stripe patterns. The reduced amplitude dynamics follow a Stuart-Landau equation with noise, leading to a state-dependent stochastic differential equation. Numerical simulations confirm rare escapes from the equilibrium amplitude, consistent with theoretical predictions. In Section \ref{CF}, we summarize our findings and present our conclusions, as well as a number
of directions for future study.

\section{Equivalence of full SPDE and coupled system}\label{E}
We study SPDEs of the following type:
\begin{equation}\label{SPDE}
du(t)=\mathcal{L}(u(t))dt+G(u(t))dW(t),\quad\quad u(0)=u_0\in\mathcal{H},
\end{equation}
where $\mathcal{L}$ denotes a nonlinear differential operator. The diffusion operator $G: \mathcal{H}\rightarrow L_{HS}(X,\mathcal{H})$ maps into the space of Hilbert-Schmidt operators. The noise $W$ is an $X$-valued $\mathcal{Q}$-Wiener process dependent on both space and time variables. The noise is trace-class (Hilbert-Schmidt), ensuring the SPDE \eqref{SPDE} is well-posed.
Here, $X$ is a separable Hilbert space whose dual space
is identified with $X$ itself. We assume the decomposition
\begin{equation*}
\mathcal{L}(u)=\mathcal{A}u+\mathcal{F}(u),
\end{equation*}
where the unbounded linear operator $\mathcal{A}$ is the generator of a strongly continuous semigroup on a Hilbert space $\mathcal{H}$, and the nonlinear operator $\mathcal{F}: \mathcal{H}\rightarrow\mathcal{H}$ encodes the
nonlinearity. Rewriting \eqref{SPDE} as an
abstract stochastic evolution equation
\begin{equation*}
du(t)=[\mathcal{A}u(t)+\mathcal{F}(u(t))]dt+G(u(t))dW(t).
\end{equation*}
Since the solutions of the above equation are typically insufficiently regular for classical interpretations of the partial differential equation, they are understood in a weaker sense. One possibility
is the mild formulation, derived from the variation of constants formula.

\begin{definition} (Existence and uniqueness of local mild solution).
Let $(\Omega,\mathcal{F},\mathcal{F}_t,\mathbb{P})$ be a filtered probability space and $\mathcal{H}$ be a phase space. A continuous stochastic process $u\in C^{0}([0,\tau^{\ast}),X)$, defined for some almost surely positive
stopping time $\tau^{\ast}>0$, is called a mild solution of \eqref{SPDE} with initial condition $u_0$ if the following equality
\begin{equation}\label{I}
u(t)=e^{t\mathcal{A}}u_0+\int_{0}^{t}e^{(t-s)\mathcal{A}}\mathcal{F}(u(s))ds+\int_{0}^{t}e^{(t-s)\mathcal{A}}G(u(s))dW(s)
\end{equation}
holds $\mathbb{P}$-almost surely for all $t\in(0,\tau^{\ast})$. We say that pathwise uniqueness holds if, for any two
continuous processes $u_1$ and $u_2$ defined on the same probability space and satisfying \eqref{I}, we have
\begin{equation*}
\mathbb{P}\big(u_1(t)=u_2(t)\,\,\text{for all}\,\,t\in(0,\tau^{\ast})\big)=1.
\end{equation*}
\end{definition}

\begin{remark}
If $G$ is the identity operator on $\mathcal{H}$, the SPDEs in \eqref{SPDE} are driven by additive white noise.
Otherwise, they are perturbed by multiplicative white noise.
\end{remark}
We assume that the local mild solution to the SPDE \eqref{SPDE} is unique and corresponds to a stochastic
process with sufficient regularity in $\mathcal{H}$. Furthermore, we aim to approximate this solution utilizing ansatz functions of the form
$u^{h}(t,x):=u(t,x;h)$, where the time-dependent coordinate $h$ parameterizes the position on the approximate slow manifold.

\begin{definition} (Approximate slow manifold).
For an open parameter space $\mathcal{P}\subset\mathbb{R}^{n}$ (e.g., position of interfaces), we consider an $n$-dimensional manifold
\begin{equation*}
\mathcal{M}=\{u^{h}\in\mathcal{H}:h\in\mathcal{P}\}
\end{equation*}
embedded in an infinite-dimensional stochastic system within a  Hilbert space $\mathcal{H}$, equipped with the
scalar product $\langle\cdot,\cdot\rangle$ and induced corresponding norm $\|\cdot\|$. We assume that $\mathcal{M}$ is non-degenerate.
Furthermore, we suppose that the map $h\mapsto u^{h}$ defines a $C^3$-parametrization of $\mathcal{M}$. We denote the $j$-th
partial derivative of $u^h$ with respect to $h_j$ by $u_j^h:=\partial_{h_{j}}u^{h}$, $j=1,\cdot\cdot\cdot,n$.
\end{definition}

\begin{definition} (Fermi coordinates).
As long as the solution $u$ stays close to $\mathcal{M}$,
we decompose $u$ into a point $u^{h}$ on the manifold and the component orthogonal
to the manifold. To be more precise, we define the pair of Fermi coordinates (or tubular coordinates) $(h,v)\in\mathcal{P}\times\mathcal{H}$ for $u$,
such that
\begin{equation*}
u=u^{h}+v\quad\text{with}\quad v\bot\mathcal{M}.
\end{equation*}
If the manifold is not degenerate, the derivatives $u_{j}^{h}$
span the entire tangent space. Consequently, the orthogonality condition $v\perp \mathcal{M}$ is equivalent to
\begin{equation*}
\langle u_{j}^{h},v\rangle=0\quad\text{for all}\quad j=1,\cdot\cdot\cdot,n.
\end{equation*}
\end{definition}

\begin{lemma}\label{Q}
Let $W$ be a $Q$-Wiener process in the underlying Hilbert space $\mathcal{H}$. For any $u,v\in\mathcal{H}$, the quadratic covariation of the stochastic integrals satisfies
\begin{equation*}
\langle u,dW\rangle\langle v,dW\rangle=\langle u,Qv\rangle dt.
\end{equation*}
\end{lemma}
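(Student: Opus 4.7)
The plan is to prove the identity by reducing to the Karhunen--Lo\`eve expansion of the $Q$-Wiener process $W$ and then using the fact that the quadratic covariation of independent scalar Brownian motions is diagonal. Since $Q$ is a symmetric, non-negative, trace-class operator on $\mathcal{H}$, there exists an orthonormal basis $\{e_k\}_{k\geq 1}$ of $\mathcal{H}$ with $Q e_k = \lambda_k e_k$ and $\lambda_k \geq 0$. I would then invoke the standard representation
\begin{equation*}
W(t) = \sum_{k\geq 1} \sqrt{\lambda_k}\,\beta_k(t)\, e_k,
\end{equation*}
where the $\beta_k$ are mutually independent standard real Brownian motions, and the series converges in $L^2(\Omega;\mathcal{H})$.

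Next, for fixed $u,v\in\mathcal{H}$ I would take scalar products to obtain the two real-valued It\^o integrators
\begin{equation*}
\langle u, dW\rangle = \sum_{k\geq 1} \sqrt{\lambda_k}\,\langle u,e_k\rangle\, d\beta_k(t),\qquad
\langle v, dW\rangle = \sum_{k\geq 1} \sqrt{\lambda_k}\,\langle v,e_k\rangle\, d\beta_k(t),
\end{equation*}
which are continuous real martingales whose quadratic covariation may be computed term-by-term using the rule $d\beta_i\,d\beta_j=\delta_{ij}\,dt$. Collecting the diagonal contributions gives
\begin{equation*}
\langle u, dW\rangle\,\langle v, dW\rangle = \sum_{k\geq 1} \lambda_k \langle u,e_k\rangle\langle v,e_k\rangle\, dt.
\end{equation*}
Finally, I would identify the right-hand side with $\langle u, Qv\rangle\, dt$ by expanding $Qv=\sum_k \lambda_k\langle v,e_k\rangle e_k$ and taking the inner product with $u$, completing the proof.

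The only nontrivial point, and the one I would spend the most care on, is justifying the term-by-term manipulation of the infinite series: one should first truncate at level $N$, compute the quadratic covariation of the resulting finite-dimensional martingales exactly (where $d\beta_i d\beta_j=\delta_{ij}dt$ is rigorous), and then pass to the limit $N\to\infty$ using trace-class summability $\sum_k \lambda_k <\infty$ and the Cauchy--Schwarz bound $\sum_k \lambda_k |\langle u,e_k\rangle\langle v,e_k\rangle|\leq \|u\|\,\|v\|\,\mathrm{tr}(Q)$, which guarantees convergence of the covariation process in the uniform-on-compacts-in-probability sense. Everything else is routine bookkeeping.
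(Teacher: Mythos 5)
Your proposal is correct and follows essentially the same route as the paper's proof: both expand $W$ in the eigenbasis of $Q$ as $\sum_k \sqrt{\lambda_k}\,\beta_k e_k$, apply $d\beta_k d\beta_l=\delta_{kl}\,dt$ to collect the diagonal terms, and identify the resulting sum with $\langle u,Qv\rangle$ via Parseval. Your additional remark on truncating and passing to the limit using $\mathrm{tr}(Q)<\infty$ is a careful touch the paper omits, but it does not change the argument.
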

\begin{proof}
We express the $Q$-Wiener process as $W(t)=\sum_{k\in\mathbb{N}}\alpha_kB_ke_k$, where the set of the eigenfunctions $\{e_k\}_{k\in\mathbb{N}}$ forms a complete $\mathcal{H}$-orthonormal basis, and $\{B_k\}_{k\in\mathbb{N}}$ are identically distributed $\mathbb{R}$-valued scalar Brownian motions. The symmetric operator $Q$ satisfies $Qe_{k}=\alpha_k^{2}e_k$, where $\alpha_k^{2}$ are its eigenvalues. Using the It\^{o} differential rule $dB_kdB_l=\delta_{k,l}dt$, we compute
\begin{equation*}
\langle u,dW\rangle\langle v,dW\rangle=\sum_{k,l}\alpha_k\alpha_l\langle u,e_k\rangle\langle v,e_l\rangle dB_kdB_l=\sum_{k}\alpha_k^{2}\langle u,e_k\rangle\langle v,e_k\rangle dt.
\end{equation*}
By Parseval's identity, this yields the claimed result. Only the quadratic terms ($(dB_k)^2=dt$) contribute to the differential, as cross-terms ($dB_kdB_l$ for $k\neq l$) vanish.
\end{proof}

A crucial result is the following theorem.

\begin{theorem}\label{Equivalence}
(Equivalence of full SPDE and coupled system)
Suppose $u=u^{h}+v$, where $h\in \mathcal{P}$ and $v\bot \mathcal{M}$. Then
$u$ is a solution of \eqref{SPDE} if and only if $h$ solves the stochastic differential equation
\begin{equation}\label{bW}
dh_{j}=b_j(h)dt+\langle\sigma_{j}(h),dW\rangle,
\end{equation}
where $\sigma_j$ and $b_j$ are determined by \eqref{A}, and $v$ satisfies the evolution equation
\begin{equation}\label{V}
dv=\mathcal{L}(u^{h}+v)dt+G(u^{h}+v)dW-\sum_{k}u_{k}^{h}dh_k-\frac{1}{2}\sum_{k,l} u_{kl}^{h}\langle\sigma_k,Q\sigma_l\rangle dt.
\end{equation}
\end{theorem}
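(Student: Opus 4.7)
The plan is to treat $u=u^h+v$ as an ansatz, promote $h$ to an It\^{o} process with as-yet-undetermined drift $b_j$ and diffusion $\sigma_j$, and let the Fermi orthogonality $v\perp\mathcal{M}$ pin down those coefficients uniquely. The equivalence then reduces to one application of It\^{o}'s formula together with a finite-dimensional linear algebra step, done in both directions.

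Starting from the generic SDE $dh_j=b_j(h)dt+\langle\sigma_j(h),dW\rangle$, It\^{o}'s formula applied to the $C^{3}$ map $h\mapsto u^h$ gives
\begin{equation*}
du^{h}=\sum_{k}u_{k}^{h}\,dh_{k}+\tfrac{1}{2}\sum_{k,l}u_{kl}^{h}\,d[h_{k},h_{l}],
\end{equation*}
with joint quadratic variation $d[h_{k},h_{l}]=\langle\sigma_{k},Q\sigma_{l}\rangle dt$ supplied by Lemma \ref{Q}. Writing $du=du^{h}+dv$ and imposing $du=\mathcal{L}(u)dt+G(u)dW$ then isolates $dv$ and produces \eqref{V} verbatim. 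This step is purely bookkeeping and it is reversible, so it takes care of the ``if'' direction at once.

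Next I would determine $b_j$ and $\sigma_j$ from the constraint $\langle u_j^h,v\rangle=0$, which must hold at every time $t$. Taking its It\^{o} differential yields
\begin{equation*}
0=\langle du_j^h,v\rangle+\langle u_j^h,dv\rangle+d[u_j^h,v],
\end{equation*}
where $du_j^h$ comes from a second It\^{o} expansion and $dv$ is supplied by \eqref{V}. Evaluating all quadratic covariations by Lemma \ref{Q} and separating the resulting identity into its $dW$- and $dt$-parts produces two linear systems in the unknowns. The martingale part has the shape $\sum_{k}A_{jk}(h,v)\,\sigma_{k}=G^{\ast}(u)\,u_{j}^{h}$ with
\begin{equation*}
A_{jk}(h,v)=\langle u_{j}^{h},u_{k}^{h}\rangle-\langle u_{jk}^{h},v\rangle,
\end{equation*}
and the drift part yields an analogous linear system for $b_j$, whose right-hand side collects $\mathcal{L}(u^h+v)$, the third-order derivatives $u_{jkl}^h$, and the already-determined $\sigma_k$'s. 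These are exactly the defining relations (A) of the theorem, and the ``only if'' direction follows by reading the same computation backwards.

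The main technical obstacle is the invertibility of $A_{jk}(h,v)$. By non-degeneracy of $\mathcal{M}$ the pure Gram matrix $\langle u_{j}^{h},u_{k}^{h}\rangle$ is uniformly invertible, so $A$ inherits invertibility as long as $\|v\|$ is small enough to absorb the $v$-linear perturbation $\langle u_{jk}^{h},v\rangle$. Consequently the equivalence holds pathwise up to the first exit time of $(h,v)$ from a tubular neighborhood of $\mathcal{M}$ --- exactly the regime in which the Fermi decomposition itself is well-posed and the coefficients $b_j,\sigma_j$ are smooth functions of $(h,v)$. Everything else in the argument is a straightforward application of It\^{o} calculus and of Lemma \ref{Q}.
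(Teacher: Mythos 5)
Your proposal follows essentially the same route as the paper: It\^{o} expansion of $u^{h}$ and of the orthogonality constraint $\langle u_{j}^{h},v\rangle=0$, identification of the quadratic covariations via Lemma \ref{Q}, and inversion of the matrix $A_{jk}=\langle u_{j}^{h},u_{k}^{h}\rangle-\langle u_{jk}^{h},v\rangle$ to read off $\sigma_{j}$ from the martingale part and then $b_{j}$ from the drift part, with invertibility for small $\|v\|$ exactly as in Corollary \ref{vA}. The only point you leave implicit that the paper spells out is the converse direction's explicit check that $d\langle u_{j}^{h},v\rangle=0$ under the coupled system, i.e. that the orthogonality is propagated in time so that the reconstructed $u=u^{h}+v$ remains a genuine Fermi decomposition.
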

\begin{proof}
Given the decomposition $u=u^{h}+v$ with $h\in\mathcal{P}$ and $v\bot\mathcal{M}$, if $u$ is a solution of \eqref{SPDE},
our goal is to determine
\begin{equation*}
b_{j}:\mathcal{P}\mapsto\mathbb{R}^{n}\quad\text{and}\quad\sigma_{j}:\mathcal{P}\mapsto\mathcal{H},
\end{equation*}
such that $u(t)\approx u^{h(t)}$, where the diffusion process $h$ solves the system of SDEs \eqref{bW}.

Assume the solution $u$ to \eqref{SPDE} can be decomposed into Fermi coordinates with $h$ governed by \eqref{bW}. Differentiating
\begin{equation*}
u=u^{h}+v\quad\quad\text{and}\quad\quad\langle u_{j}^{h},v\rangle=0,
\end{equation*}
we derive via It\^{o}'s formula and It\^{o}'s product rule:
\begin{equation}\label{uv}
\sum_{k}u_{k}^{h}dh_{k}+\underset{\text{It\^{o}-correction}}{\underbrace{\frac{1}{2}\sum_{k,l}u_{kl}^{h}dh_{k}dh_{l}}}+dv=du=
\mathcal{L}(u^{h}+v)dt+G(u^{h}+v)dW,
\end{equation}
and
\begin{equation}\label{product}
0=\langle du_{j}^{h},v\rangle+\langle u_{j}^{h},dv\rangle+\underset{\text{It\^{o}-correction}}{\underbrace{\langle du_{j}^{h},dv\rangle}}.
\end{equation}

Now we calculate $\langle du_{j}^{h},dv\rangle$. Using Lemma \ref{Q} and the fact that $dWdt=0$ and $dtdt=0$,
\begin{align}\nonumber
\langle du_{j}^{h},dv\rangle&=\sum_{k}\langle u_{jk}^{h}dh_k,G(u^{h}+v)dW\rangle-\sum_{k}\langle u_{jk}^{h}dh_k,\sum_{l}u_{l}^{h}dh_l\rangle \\ \nonumber
   &=\sum_{k}\langle\sigma_k,dW\rangle\langle u_{jk}^{h},G(u^{h}+v)dW\rangle-\sum_{k,l}\langle u_{jk}^{h},u_{l}^{h}\rangle\langle\sigma_k,dW\rangle\langle\sigma_l,dW\rangle\\ \label{dudv}
   &=\sum_{k}\langle G(u^{h}+v)u_{jk}^{h},Q\sigma_k\rangle dt-\sum_{k,l}\langle u_{jk}^{h},u_{l}^{h}\rangle
   \langle \sigma_k,Q\sigma_l\rangle dt:=F_j(u^h,v)dt.
\end{align}
From
\begin{equation}\label{dh}
dh_kdh_l=\langle \sigma_k,Q\sigma_l\rangle dt
\end{equation}
substitute into \eqref{uv} to get the expression of $dv$ in \eqref{V}, and also expand
\begin{align}\nonumber
\langle du_{j}^{h},v\rangle&=\sum_{k}\langle u_{jk}^{h},v\rangle dh_k+\frac{1}{2}\sum_{k,l}\langle u_{jkl}^{h},v\rangle dh_kdh_l  \\ \label{vdu}
                             &=\sum_{k}\langle u_{jk}^{h},v\rangle dh_k+\frac{1}{2}\sum_{k,l}\langle u_{jkl}^{h},v\rangle\langle \sigma_k,Q\sigma_l\rangle dt.
\end{align}
Substituting \eqref{dudv} and \eqref{vdu} into \eqref{product}, and expressing $\langle u_{j}^{h},dv\rangle$ via \eqref{uv}, we obtain
\begin{align*}
  0&\overset{\eqref{product}}{=}\langle du_{j}^{h},v\rangle+\langle u_{j}^{h},dv\rangle+\langle du_{j}^{h},dv\rangle \\
   &\underset{\eqref{vdu}}{\overset{\eqref{dudv}}{=}}\sum_{k}\langle u_{jk}^{h},v\rangle dh_{k}+\frac{1}{2}\sum_{k,l}\langle u_{jkl}^{h},v\rangle\langle\sigma_k,Q\sigma_l\rangle dt+\langle u_{j}^{h},dv\rangle+F_j(u^h,v)dt\\
   &\overset{\eqref{uv}}{=}\sum_{k}\langle u_{jk}^{h},v\rangle dh_{k}+\frac{1}{2}\sum_{k,l}\langle u_{jkl}^{h},v\rangle\langle\sigma_k,Q\sigma_l\rangle dt+\langle u_{j}^{h},\mathcal{L}(u^{h}+v)\rangle dt
   +\langle u_{j}^{h},G(u^{h}+v)dW\rangle \\&\quad\quad-\sum_{k}\langle u_{j}^{h},u_{k}^{h}\rangle dh_{k}-\frac{1}{2}\sum_{k,l}\langle u_{j}^{h},u_{kl}^{h}\rangle\langle\sigma_k,Q\sigma_l\rangle dt+F_j(u^h,v)dt.
\end{align*}
Define the matrix  $A$ with entries
\begin{equation}\label{Ajk}
A_{j,k}:=\langle u_{j}^{h},u_{k}^{h}\rangle-\langle u_{jk}^{h},v\rangle.
\end{equation}
  Finally,
\begin{align} \label{A}
\sum_{k}A_{j,k}dh_k&=\sum_{k}\big(\langle u_{j}^{h},u_{k}^{h}\rangle-\langle u_{jk}^{h},v\rangle\big)dh_k\\ \nonumber
&=\langle u_{j}^{h},\mathcal{L}(u^{h}+v)\rangle dt+\langle u_{j}^{h},G(u^{h}+v)dW\rangle +\frac{1}{2}\sum_{k,l}\big(\langle u_{jkl}^{h},v\rangle-\langle u_{j}^{h},u_{kl}^{h}\rangle\big)\langle\sigma_k,Q\sigma_l\rangle dt+F_j(u^h,v)dt.
\end{align}
We denote the left-hand side of \eqref{A} as $A(h,v)\cdot dh$. If the matrix $A(h,v)$ is invertible, we determine first $\sigma_j$ as prefactor of $dW$ and then $b_j$.
 Then,
the drift $b$ and diffusion $\sigma$ are given by
\begin{equation}\label{bh}
b_{j}(h)=\sum_{k}A_{jk}^{-1}\Big[\langle u_{j}^{h},\mathcal{L}(u^{h}+v)\rangle+\frac{1}{2}\sum_{l}\big(\langle u_{jkl}^{h},v\rangle-\langle u_{j}^{h},u_{kl}^{h}\rangle\big)\langle\sigma_k,Q\sigma_l\rangle+F_j(u^h,v)\Big]
\end{equation}
and
\begin{equation}\label{sigmah}
\sigma_{j}(h)=\sum_{k}A_{jk}^{-1}G(u^{h}+v)^{*} u_{j}^{h},
\end{equation}
respectively. In the last equation, $G(u^{h}+v)^{*}$ is the adjoint operator of $G(u^{h}+v)$. By substituting \eqref{bh}-\eqref{sigmah} into \eqref{bW}, it can be concluded that $h$ is indeed a semimartingale, composed of a bounded-variation drift term and a martingale.

For the proof, note that we established one direction above. We now consider the converse.

Suppose the pair of functions $(h,v)$  is the solution to the system governed by $\eqref{bW}$ and $\eqref{V}$, where $b$ and $\sigma$
are characterized by $\eqref{bh}$ and $\eqref{sigmah}$, respectively. Assume the matrix $A(h,v)$ given by $\eqref{Ajk}$ is invertible for all time $t$, and the initial condition $u(0)=u^{h(0)}+v(0)$ satisfies $\langle u_{j}^{h(0)},v(0)\rangle=0$ for all $j=1,\cdot\cdot\cdot,n$. Then $u=u^{h}+v$ solves $\eqref{SPDE}$ with $\langle u_{j}^{h},v\rangle=0$ for $j=1,\cdot\cdot\cdot,n$. This follows by reversing the earlier arguments.

The orthogonality $\langle u_{j}^{h},v\rangle=0$ is preserved because $d\langle u_{j}^{h},v\rangle=0$ for all $j\in\{1,\cdot\cdot\cdot,n\}$ and $\langle u_{j}^{h(0)},v(0)\rangle=0$. To verify this, we calculate
\begin{align*}
d\langle u_{j}^{h},v\rangle=&\langle du_{j}^{h},v\rangle+\langle u_{j}^{h},dv\rangle+\langle du_{j}^{h},dv\rangle \\
                           =&\langle du_{j}^{h},v\rangle+\langle u_{j}^{h},dv\rangle+\langle du_{j}^{h},du\rangle-\langle du_{j}^{h},du^{h}\rangle\\
                           =&\sum_{k}\langle u_{jk}^{h},v\rangle dh_k+\frac{1}{2}\sum_{k,l}\langle u_{jkl}^{h},v\rangle\langle \sigma_k,Q\sigma_l\rangle dt+\langle u_{j}^{h},\mathcal{L}(u^{h}+v)\rangle dt
   +\langle u_{j}^{h},G(u^{h}+v)dW\rangle\\
   &-\sum_{k}\langle u_{j}^{h},u_{k}^{h}\rangle dh_k
   -\frac{1}{2}\sum_{k,l}\langle u_{j}^{h},u_{kl}^{h}\rangle\langle\sigma_k,Q\sigma_l\rangle dt+\sum_{k}\langle G(u^{h}+v)u_{jk}^{h},Q\sigma_k\rangle dt-\sum_{k,l}\langle u_{jk}^{h},u_{l}^{h}\rangle
   \langle \sigma_k,Q\sigma_l\rangle dt.
\end{align*}
We first collect the $dW$-terms, and then
\begin{equation*}
G(u^{h}+v)^{*} u_{j}^{h}-\sum_{k}\big(\langle u_{j}^{h},u_{k}^{h}\rangle-\langle u_{jk}^{h},v\rangle\big)\sigma_k\overset{\eqref{Ajk}}{=}G(u^{h}+v)^{*} u_{j}^{h}-\sum_{k}A_{j,k}\sigma_k\overset{\eqref{sigmah}}{=}0.
\end{equation*}
For the remaining  $dt$-terms we have
\begin{align*}
   &\frac{1}{2}\sum_{k,l}\langle u_{jkl}^{h},v\rangle\langle \sigma_k,Q\sigma_l\rangle +\langle u_{j}^{h},\mathcal{L}(u^{h}+v)\rangle -\frac{1}{2}\sum_{k,l}\langle u_{j}^{h},u_{kl}^{h}\rangle\langle\sigma_k,Q\sigma_l\rangle   \\
   &+\sum_{k}\langle G(u^{h}+v)u_{jk}^{h},Q\sigma_k\rangle -\sum_{k,l}\langle u_{jk}^{h},u_{l}^{h}\rangle
   \langle \sigma_k,Q\sigma_l\rangle -\sum_{k}\big(\langle u_{j}^{h},u_{k}^{h}\rangle-\langle u_{jk}^{h},v\rangle\big)b_k\\
   &=\langle u_{j}^{h},\mathcal{L}(u^{h}+v)\rangle+\frac{1}{2}\sum_{l}\big(\langle u_{jkl}^{h},v\rangle-\langle u_{j}^{h},u_{kl}^{h}\rangle\big)\langle\sigma_k,Q\sigma_l\rangle+F_j(u^h,v)-\sum_{k}A_{j,k}b_k\overset{\eqref{bh}}{=}0.
\end{align*}
Thus, $d\langle u_{j}^{h},v\rangle=0$,  and the orthogonality $\langle u_{j}^{h},v\rangle=0$ holds.
\end{proof}

\begin{remark}
 The coupled system \eqref{bW}-\eqref{V} is equivalent to the full SPDE \eqref{SPDE}. We can use \eqref{V} to study the stability of $\mathcal{M}$ and \eqref{bW} to analyze the motion along $\mathcal{M}$. To simplify the equation \eqref{bW}, we may neglect all terms that depend on $v$, assuming proximity to the manifold.
\end{remark}

We assumed that the matrix $A$ is invertible to solve the equation \eqref{A} for $dh$, thereby obtaining the exact stochastic formula for the shape variable $h$ given by the equation \eqref{bW}. We shall now proceed to put the assumption to the following corollary for verification.
\begin{corollary}\label{vA}
If $v$ is sufficiently small, then the  matrix $A$ is invertible.
\end{corollary}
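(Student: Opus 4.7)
The strategy is to view $A$ as a perturbation of the Gram matrix of the tangent frame and then invoke a Neumann-series argument. From \eqref{Ajk} we have the natural splitting $A(h,v) = \Gamma(h) - B(h,v)$ with
\begin{equation*}
\Gamma(h)_{jk} := \langle u_j^h, u_k^h \rangle, \qquad B(h,v)_{jk} := \langle u_{jk}^h, v \rangle,
\end{equation*}
where $\Gamma(h)$ depends only on the manifold $\mathcal{M}$, while $B(h,v)$ carries all the dependence on the orthogonal component.

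First I would exploit the non-degeneracy hypothesis on $\mathcal{M}$. Since $h \mapsto u^h$ is a $C^3$-parametrization and $\mathcal{M}$ is assumed non-degenerate, the tangent vectors $u_1^h, \dots, u_n^h$ are linearly independent in $\mathcal{H}$, so $\Gamma(h)$ is symmetric and positive definite. Denoting its smallest eigenvalue by $\lambda(h) > 0$, one has the bound $\|\Gamma(h)^{-1}\|_{\mathrm{op}} \le 1/\lambda(h)$ in the operator norm on $\mathbb{R}^n$.

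Next I would control $B(h,v)$ by Cauchy--Schwarz: $|B(h,v)_{jk}| \le \|u_{jk}^h\| \cdot \|v\|$, so that $\|B(h,v)\|_{\mathrm{op}} \le C(h)\|v\|$ for a constant $C(h)$ that may be taken as $n \max_{j,k}\|u_{jk}^h\|$ and is finite thanks to the $C^3$-regularity of the parametrization. Writing
\begin{equation*}
A(h,v) = \Gamma(h)\bigl(I - \Gamma(h)^{-1} B(h,v)\bigr),
\end{equation*}
the right-hand factor is invertible as soon as $\|\Gamma(h)^{-1} B(h,v)\|_{\mathrm{op}} < 1$, which is in turn guaranteed whenever
\begin{equation*}
\|v\| < \frac{\lambda(h)}{C(h)}.
\end{equation*}
In this regime $A(h,v)^{-1}$ is given by the convergent Neumann series $\sum_{m \ge 0}\bigl(\Gamma(h)^{-1} B(h,v)\bigr)^m \Gamma(h)^{-1}$, which proves the claim.

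The only mild subtlety, and the step worth checking most carefully, is that the smallness threshold depends a priori on $h$. However, on any compact subset $K \subset \mathcal{P}$ both $\lambda(h)$ and $C(h)$ depend continuously on $h$ and are respectively bounded below and above, so a uniform threshold $\delta_K > 0$ exists ensuring invertibility whenever $\|v\| < \delta_K$. Beyond tracking these constants, no serious obstacle is expected, since the argument is essentially routine finite-dimensional perturbation theory applied pointwise in $h$.
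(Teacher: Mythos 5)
Your proposal is correct and follows essentially the same route as the paper: both split $A$ as in \eqref{Ajk} into the first fundamental form $P_{jk}=\langle u_j^h,u_k^h\rangle$, which is positive definite by non-degeneracy of $\mathcal{M}$, minus the perturbation $\langle u_{jk}^h,v\rangle$, which is small with $\|v\|$. The paper phrases this via the Hessian of $\Phi(h)=\|u-u^h\|^2/2$ and simply asserts that positive-definiteness persists for small $\|v\|$; your Cauchy--Schwarz bound, Neumann series, and explicit threshold $\|v\|<\lambda(h)/C(h)$ make that final step quantitative, which is a minor refinement rather than a different argument.
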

\begin{proof}
If the solution $u$ is sufficiently close to the slow manifold $\mathcal{M}$, there exists a unique $h\in\mathcal{P}$ such that
\begin{equation*}
\text{dist}(u,\mathcal{M})=\inf_{\hat{h}\in\mathcal{P}}\|u-u^{\hat{h}}\|=\|u-u^{h}\|.
\end{equation*}
Differentiating the map $\Phi(h)=\|u-u^{h}\|^2/2$ with
respect to all variables $h_j$, we obtain
\begin{equation*}
\langle u-u^{h}, u_j^{h}\rangle=0\quad\text{for all}\quad j=1,\cdot\cdot\cdot,n.
\end{equation*}
This implies that the vector $v=u-u^{h}$ is orthogonal to the tangent space $\mathcal{T}_{u^{h}}\mathcal{M}$ of $\mathcal{M}$
in $u^{h}$. The Hessian matrix of the map $\Phi$,
\begin{equation*}
(H_{\Phi}(h))_{j,k}=\langle u_{j}^{h},u_{k}^{h}\rangle-\langle u_{jk}^{h},v\rangle,
\end{equation*}
is closely related to the first fundamental form $P$ of the manifold $\mathcal{M}$, defined by $P_{jk}=\langle u_{j}^{h},u_{k}^{h}\rangle$. Since the vectors $u_{j}^{h}$, $j=1,\cdot\cdot\cdot,n$, form a basis of the tangent space $\mathcal{T}_{u^{h}}\mathcal{M}$, for any $w\in\mathcal{T}_{u^{h}}\mathcal{M}$ we can find a vector $\beta=(\beta_1,\cdot\cdot\cdot,\beta_n)$
such that $w=\sum_{j}\beta_ju_{j}^{h}$. Consequently,
\begin{equation*}
\|w\|^{2}=\sum_{j,k=1}^{n}\beta_j\beta_k\langle u_{j}^{h},u_{k}^{h}\rangle=\beta^{\top}P\beta,
\end{equation*}
which shows that the fundamental form $P$ is positive definite. If the distance $\|u-u^{h}\|$ is sufficiently
small, the Hessian $H_{\Phi}(h)$ is also positive definite, ensuring that $\Phi$ attains
minima only near the slow manifold $\mathcal{M}$.
\end{proof}
\begin{remark}
The invertibility of the matrix $A$ is not limited to $v$ having a sufficiently small norm. Furthermore, the converse of Corollary \ref{vA} does not hold.
\end{remark}

\section{Effective dynamics of stochastic models}\label{ED}
In this section, we provide four examples about stochastic wave models
to corroborate our analytical results of effective models.

\subsection{Stochastic damped wave equation with multiplicative noise}\label{DW}
The stochastic damped wave equation  under multiplicative noise models oscillatory systems with dissipation and noise, such as vibrating membranes under random forcing \cite{T}. Now we combine dimension reduction techniques to quantify solutions near a slow manifold in the phase space.

Consider the stochastic damped wave equation driven by multiplicative noise on $\mathcal{D}=[0,1]$ with Dirichlet boundary conditions:
\begin{equation}\label{sdw}
\partial_t^{2}u+\gamma\partial_tu=\Delta u-u^{3}+\varepsilon u\circ\partial_tW(t,x),\quad u(0,x)=u_0(x),\, \partial_tu(0,x)=w_0(x),
\end{equation}
where $\gamma>0$ is the damping coefficient, $\varepsilon$ controls the noise intensity, $W(t,x)$ is space-time white noise, and $\circ$ denotes Stratonovich integration for state-dependent noise. Rewrite \eqref{sdw} as a first-order system
\begin{equation*}\left\{
\begin{array}{l}
du=wdt,\\
dw=\Big(-\gamma w+\Delta u-u^{3}\Big)dt+\varepsilon u\circ dW(t,x).
\end{array}\right.
\end{equation*}
For small noise ($0<\varepsilon\ll1$), we decompose $u(t,x)=u^{h(t)}(x)+v(t,x)$, where $u^{h(t)}(x)=h(t)\sin(\pi x)\in\mathcal{M}$ and $v\bot\mathcal{M}$.
The parameter $h(t)$ satisfies
\begin{equation*}
dh=\Big[-\frac{\pi^2}{\gamma}h-\frac{3}{\gamma}h^3\Big]dt+\varepsilon h\circ dB(t),
\end{equation*}
where $B(t)$ is a scalar Brownian motion derived from projecting $W(t,x)$ onto the manifold basis with the function $\sin(\pi x)$.
It yields
\begin{equation*}
B(t)=\int_0^{t}\int_{\mathcal{D}}\sin(\pi x)dW(s,x)
\end{equation*}

The diffusion coefficient $\varepsilon h$ depends on $h$, reflecting state-dependent noise. The Stratonovich formulation preserves the chain rule, avoiding It\^{o}-to-Stratonovich corrections. Converting the above reduced SDE to  the It\^{o} form
\begin{equation*}
dh=\Big(-\frac{\pi^2}{\gamma}h-\frac{3}{\gamma}h^{3}+\frac{\sigma^{2}}{2}h\Big)dt+\varepsilon hdB(t).
\end{equation*}
The deterministic flow
\begin{equation*}
\partial_tu=\frac{1}{\gamma}(\Delta u-u^3)
\end{equation*}
has a spectral gap $\lambda=\pi^2/\gamma>0$. This ensures the exponential stability. We assume the non-degeneracy $\|u^{h}\|\geq c>0$, ensuring $\varepsilon h$ does not vanish. The orthogonal component $v(t,x)$ satisfies
\begin{equation*}
\|v(t)\|_{H^1}\leq e^{-\lambda t}\|v_0\|_{H^1}+\varepsilon\int_{0}^{t}e^{-\lambda(t-s)}\|u^{h}(s)\|_{L^2}dW(s).
\end{equation*}
Hence, the term $v(t)$ is bounded via
\begin{equation*}
\mathbb{E}(\sup_{0<t\leq T}\|v(t)\|_{H^1}^{2})\leq C\left(e^{-2\lambda T}\|v_0\|_{H^1}^{2}+\varepsilon^{2}\int_0^{T}e^{-2\lambda(T-s)}\mathbb{E}(\|u^{h}(s)\|_{L^2}^2)ds\right).
\end{equation*}

Fig. \ref{wave} simulates the reduced one-dimensional SDE for the stochastic damped wave equation with multiplicative noise and generates plots illustrating the dynamics. The deterministic solution decays to $h=0$, while the stochastic solution fluctuates around $h=0$ with variance controlled by $\varepsilon$ due to multiplicative noise but remains near the manifold, as seen in the time series.
The distribution of $h(T)$ is symmetric and centered at $h=0$, as shown in the histogram. The deterministic drift (black curve) pulls $h$ toward 0, while stochastic paths (red) explore noise-driven deviations.

\begin{figure}
\begin{center}
\begin{minipage}{3.2in}
\leftline{(a)}
\includegraphics[width=3.2in]{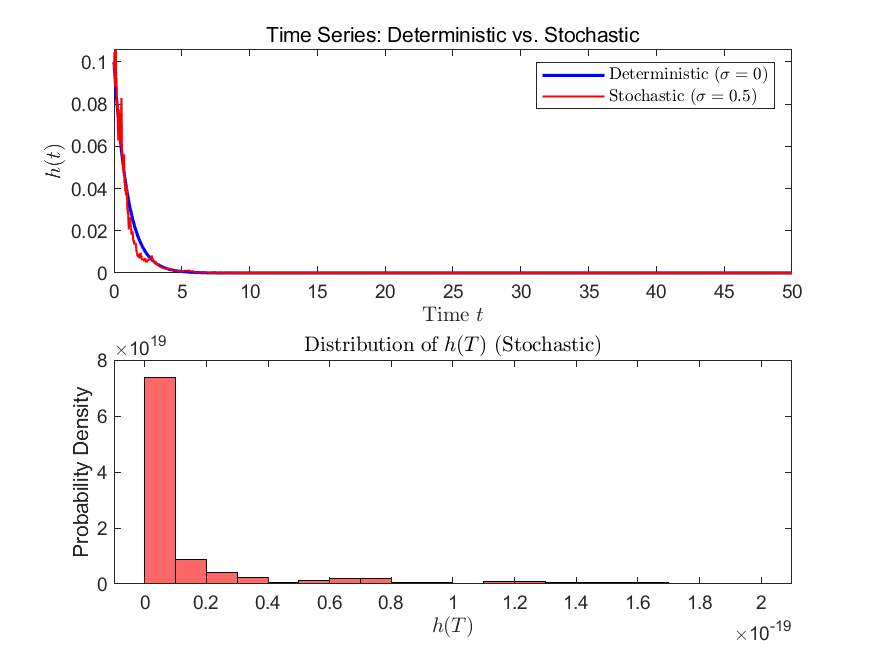}
\end{minipage}
\hfill
  \begin{minipage}{3.2in}
\leftline{(b)}
\includegraphics[width=3.2in]{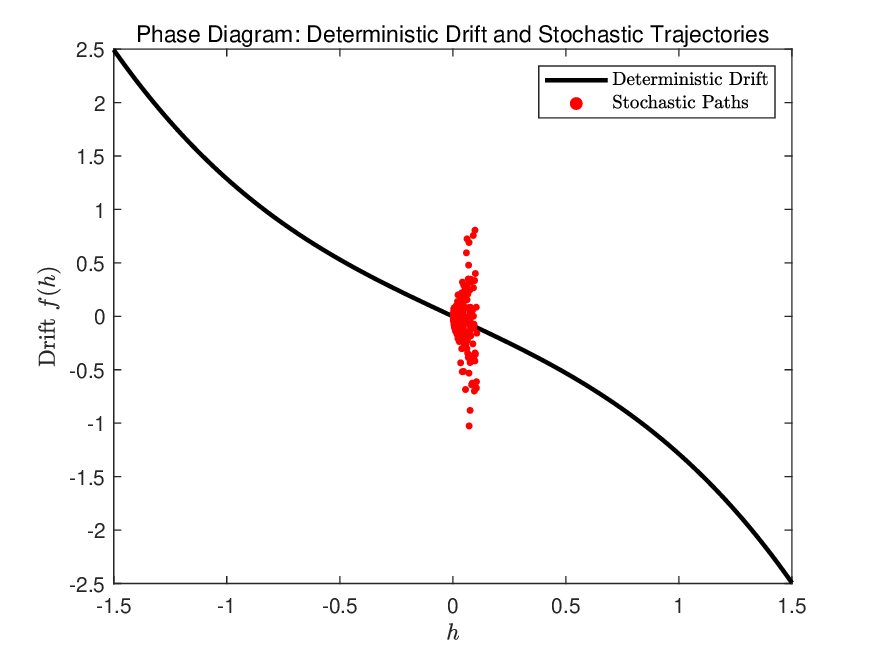}
\end{minipage}
\caption{(a) A time series comparison (deterministic vs. stochastic), and a  histogram of the final state $h(T)$; (b) A phase diagram showing the deterministic drift and sample paths. Damping coefficient $\gamma=10$; Linear damping rate $a=\pi^2/\gamma$; Nonlinear damping coefficient $b=3/\gamma$; Noise intensity $\varepsilon=0.5$; Initial condition $h_0 = 0.1$; Total simulation time $T=50$; Time step $dt=0.01$.}\label{wave}
\end{center}
\end{figure}
\subsection{Stochastic Allen-Cahn  equation with multiplicative noise}\label{ACE}
The Allen-Cahn equation models phase separation in bistable systems, with applications in material science and biology \cite{HLR}. Its deterministic dynamics exhibit front solutions connecting stable equilibria. Under weak noise, these fronts fluctuate near a slow manifold, with transitions between metastable states \cite{W}. Now we quantify the stochastic stability of front solutions and characterize exit times using dimension reduction.

Consider the stochastic Allen-Cahn equation with Neumann boundary conditions $\partial_{x}u|_{x=0}=\partial_{x}u|_{x=L}=0$:
\begin{equation*}
\partial_tu=\Delta u+u-u^{3}+\varepsilon u\circ \partial_tW(t,x), \quad  x\in[0,L],
\end{equation*}
where
$u(t,x)$ represents the order parameter (e.g., phase separation), $\varepsilon>0$ scales the multiplicative Stratonovich noise,
and $W(t,x)$ is a space-time white noise. This SPDE models phase separation with stochastic fluctuations.

For weak noise ($\varepsilon\ll1$), we decompose the solution as $u(t,x)=u^{h(t)}(x)+v(t,x)$, where $v\bot\mathcal{M}$. For
$L\gg1$, a front located near position $h(t)$ can be approximated as
\begin{equation*}
u^{h}(x)=\tanh\Big(\frac{x-h(t)}{\sqrt{2}}\Big).
\end{equation*}
The kink solutions (heteroclinic fronts) parameterized by their interface position $h$ form a slow manifold $\mathcal{M}=\{u^{h}(x)\in\mathcal{H}: h\in[0,L]\}$, where $\mathcal{H}=L^{2}(\mathbb{R})$. The tangent space $T_{u^h}\mathcal{M}$ at $u^h$ is spanned by $\partial_hu^h$, which represents the translational mode of the front.
The orthogonal component $v$ satisfying $\langle\partial_hu^h,v\rangle=0$ captures fluctuations away from the slow manifold.

The tangent vector to $\mathcal{M}$ (Goldstone mode) is
\begin{equation*}
\psi_h(x):=\partial_hu^h(x)=-\frac{1}{\sqrt{2}}\text{sech}^{2}(\frac{x-h}{\sqrt{2}}).
\end{equation*}

Using Theorem \ref{Equivalence}, we project the SPDE onto the tangent space $T_{u^h}\mathcal{M}$.
The matrix $A$ (scalar here) is
\begin{equation*}
A=\langle\psi_h,\psi_h\rangle-\langle\partial_h\psi_h,v\rangle\approx\langle\psi_h,\psi_h\rangle=\|\psi_h\|^{2},
\end{equation*}
since $v$ is small. The term $\langle\partial_h\psi_h,v\rangle$ is negligible under the approximation $v\approx0$.
The drift is
\begin{equation*}
b(h)=\frac{\langle\psi_h,\mathcal{L}(u^{h})\rangle}{\|\psi_h\|^{2}}\approx e^{-\sqrt{2}L},\quad \text{where}\,\, \mathcal{L}(u^{h})=\Delta u^h+u^h-(u^h)^{3}.
\end{equation*}
It is the exponentially small drift from neighboring fronts.
We compute  the diffusion
\begin{equation*}
\sigma(h)=\frac{\langle\psi_h,G(u^{h})\rangle}{\|\psi_h\|^{2}}=\frac{\varepsilon}{\|\psi_h\|^{2}}\int_0^{L}\psi_h(x)u^h(x)dx,\quad \text{where}\,\, G(u^{h})=\varepsilon u^{h}.
\end{equation*}
The reduced dynamics for the front position $h(t)$ become
\begin{equation*}
dh=b(h)dt+\sigma(h)\circ dB(t),
\end{equation*}
where $B(t)$ is a scalar Brownian motion.

Fig. \ref{AC} simulates the reduced SDE for $h(t)$ for $L=20$.   The front position $h(t)$ fluctuates near its initial position $h_0$ for exponentially long times, which evolves at an exponentially slowly rate due to metastability.

\begin{figure}
\centering
\includegraphics[width=10cm]{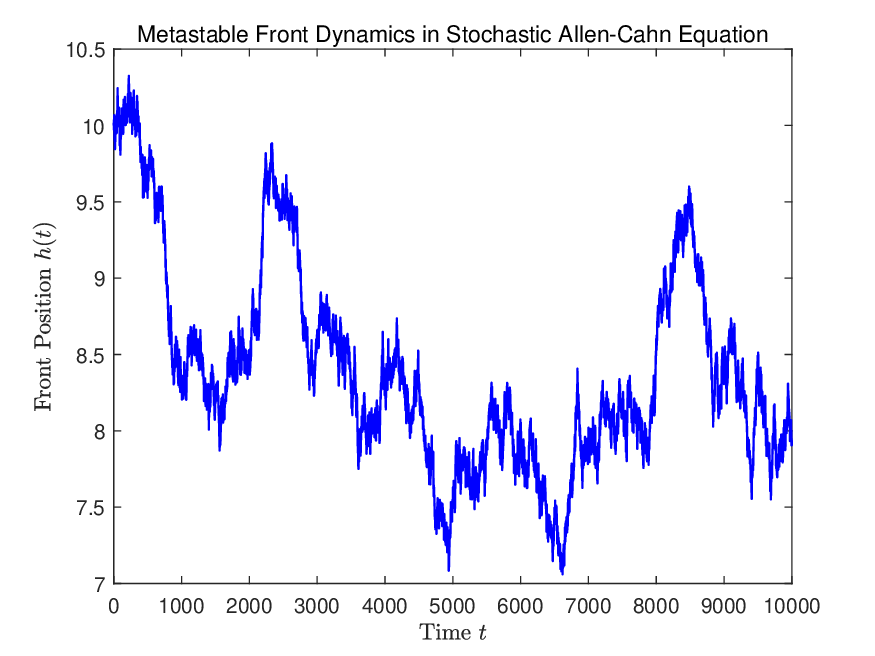}
\caption{Front position $h(t)$ in stochastic Allen-Cahn equation. Domain length $L=20$; Noise intensity $\varepsilon=0.1$; Initial front position $h_0=L/2$; Total time $T=10^4$; Time step $dt=0.1$.
}\label{AC}	
\end{figure}

\subsection{Stochastic nonlinear Schr\"{o}dinger equation with multiplicative noise}\label{SE}
The nonlinear Schr\"{o}dinger equation governs wave propagation in diverse media, from optical fibers to Bose-Einstein condensates. Its soliton solutions, arising from a balance between dispersion and nonlinearity, are stable under deterministic dynamics. However, real-world systems often experience stochastic perturbations, necessitating an understanding of noise effects on soliton stability \cite{BD}.

We consider the stochastic nonlinear Schr\"{o}dinger equation with multiplicative Stratonovich space-time white noise on a spatial domain $x\in\mathbb{R}$:
\begin{equation*}
i du=[-\partial_x^{2}u-|u|^{2}u]dt+i\varepsilon u\circ dW(t,x),
\end{equation*}
where $u(t,x)$ is a complex-valued wave function, $\varepsilon>0$ scales the noise intensity, and $W(t,x)$ is a space-time white noise process. This equation models wave propagation in nonlinear media with stochastic perturbations (e.g., optical fibers with random gain or loss). A critical question is whether solitons maintain coherence under noise, or if their positions diffuse unpredictably.

For weak noise ($\varepsilon\ll1$), we decompose the solution as $u(t,x)=u^{h(t)}(x)+v(t,x)$, where $v\bot T_{u^{h}}\mathcal{M}$. The soliton position $h(t)$ is a free parameter due to translational invariance, forming a slow manifold $\mathcal{M}=\{u^h(x): h\in\mathbb{R}\}$ with
\begin{equation*}
u^{h(t)}(x)=\eta(x-h(t))e^{i\phi(t)},
\end{equation*}
where $\eta(x)=\sqrt{2}\text{sech}(x)$ is the soliton profile, and $\phi(t)$ is the phase. The tangent space $T_{u^{h}}\mathcal{M}$ is spanned by the Goldstone mode $\psi_h(x)=-\eta'(x-h)e^{i\phi}$ (tangent to $\mathcal{M}$), arising from translational symmetry.

Projecting the SPDE onto
$\mathcal{M}$ yields the reduced dynamics for $h(t)$. The projection gives the Stratonovich SDE
\begin{equation}\label{SNLS}
dh=\varepsilon\sqrt{\langle\psi_h,\psi_h\rangle}\circ dB(t),\quad \psi_h(x)=\partial_h u^h(x),
\end{equation}
where $B(t)$ is a scalar Brownian motion. Translational symmetry nullifies the drift, leaving only noise-driven dynamics. Converting to  It\^{o} form
introduces a noise-induced drift
\begin{equation}\label{INLS}
dh=\frac{\varepsilon^2}{2}\langle\psi_h,\psi_h\rangle dt+\varepsilon\sqrt{\langle\psi_h,\psi_h\rangle}dB(t).
\end{equation}
Here, $\langle\psi_h,\psi_h\rangle=\|\eta'\|^{2}_{L^{2}}=\frac{4}{3}$ is constant, computed from $\eta(x)=\sqrt{2}\text{sech}(x)$.
The linearized operator around $u^h$ has a spectral gap $\lambda\sim1$, ensuring exponential decay of orthogonal perturbations.
Solutions remain close to $\mathcal{M}$ for exponential long times, demonstrating remarkable coherence.

\begin{figure}
\centering
\includegraphics[width=10cm]{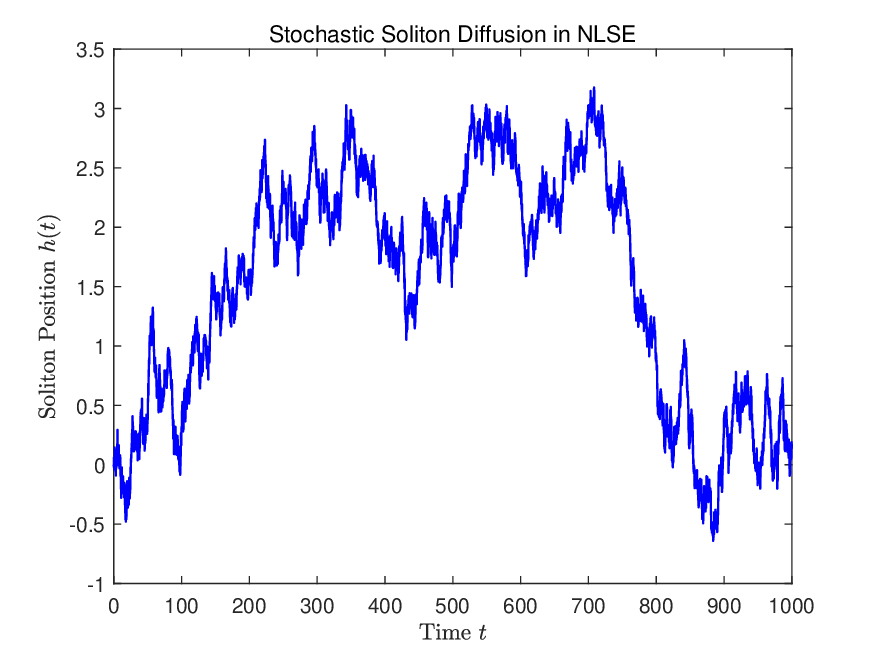}
\caption{Simulated trajectory of $h(t)$ from the reduced SDE, showing noise-driven diffusion with drift. The soliton maintains positional coherence over long times despite stochastic perturbations. Noise intensity $\varepsilon=0.1$; Initial position $h_0=0$;  Total time $T=10^3$; Time step $dt=0.1$.
}\label{NLSE}	
\end{figure}

We simulate the It\^{o} SDE \eqref{INLS} and visualize diffusive motion numerically in Fig. \ref{NLSE}. The soliton position $h(t)$ exhibits Brownian motion with a drift proportional to $\varepsilon^2$, consistent with \eqref{INLS}, reflecting noise-induced translational diffusion. Crucially,  the soliton profile remains coherent (shape-preserving) over exponentially long timescales, as orthogonal modes $v(t,x)$ are suppressed by the spectral gap.

\subsection{Stochastic Swift-Hohenberg equation with multiplicative noise}\label{SHE}
The stochastic Swift-Hohenberg equation models pattern formation (e.g., stripes, hexagons) in systems such as Rayleigh-B\'enard convection and morphogenesis, with noise perturbing the amplitude of patterns \cite{HO}. Its captures the interplay between deterministic symmetry-breaking and random fluctuations \cite{KBM}.

We study the stochastic Swift-Hohenberg equation incorporating multiplicative Stratonovich noise on a periodic spatial domain $x\in[0,L]$:
\begin{equation*}
\partial_tu=-(\partial_x^{2}+1)^{2}u+\delta u-u^3+\varepsilon u\circ\partial_tW(t,x),
\end{equation*}
where $\delta>0$ is the bifurcation parameter, $\varepsilon>0$ scales the  noise,
and $W(t,x)$ is a space-time white noise.

For $\delta\ll1$, we decompose the solutions as $u(t,x)=u^{h(t)}(x)+v(t,x)$, where $u^{h(t)}(x)$ with amplitude and phase lies on the slow manifold of stripe patterns
\begin{equation*}
\mathcal{M}=\{u^h(x)=h\cos(x-\phi): h\in\mathbb{R}, \phi\in[0,2\pi)\},
\end{equation*}
and $v$ is orthogonal to $\mathcal{M}$. The tangent space $T_{u^h}\mathcal{M}$ is spanned by $\partial_{h}u^h=\cos(x-\phi)$ and $\partial_{\phi}u^h=h\sin(x-\phi)$.
Weak noise perturbs amplitude dynamics, inducing stochastic fluctuations in $h(t)$.

Projecting the SPDE onto $\partial_hu^{h}$ yields the Stratonovich SDE
\begin{equation*}
dh=(\delta h-3h^{3})dt+\varepsilon h\circ dB(t),
\end{equation*}
where $B(t)$ is a scalar Brownian motion derived from projecting $W(t,x)$ onto the $\cos(x-\phi)$ mode.
Converting to It\^{o} form introduces a noise-induced drift
\begin{equation}\label{ISH}
dh=(\delta h-3h^{3}+\frac{\varepsilon^{2}}{2}h)dt+\varepsilon hdB(t).
\end{equation}
The equilibrium amplitude shifts slightly to $h_{\ast}=\sqrt{(\delta+\varepsilon^2/2)/3}$, but for $\varepsilon^2\ll\delta$, $h_{\ast}\approx\sqrt{\delta/3}$.
Linearizing \eqref{ISH} around $h_{\ast}$ gives the stability rate $\lambda=2\delta$. Solutions remain $\varepsilon$-close to $\mathcal{M}$ for exponential long times.

We simulate  the reduced SDE \eqref{ISH} for $h(t)$ and visualizes metastability using the Euler-Maruyama method; see Fig. \ref{SH}. The amplitude $h(t)$ fluctuates near
$h_{\ast}=\sqrt{\delta/3}$ due to noise, which exhibits small fluctuations around $h_{\ast}$, with escapes occurring on timescales growing exponentially as $\varepsilon\rightarrow0$.

\begin{figure}
\centering
\includegraphics[width=10cm]{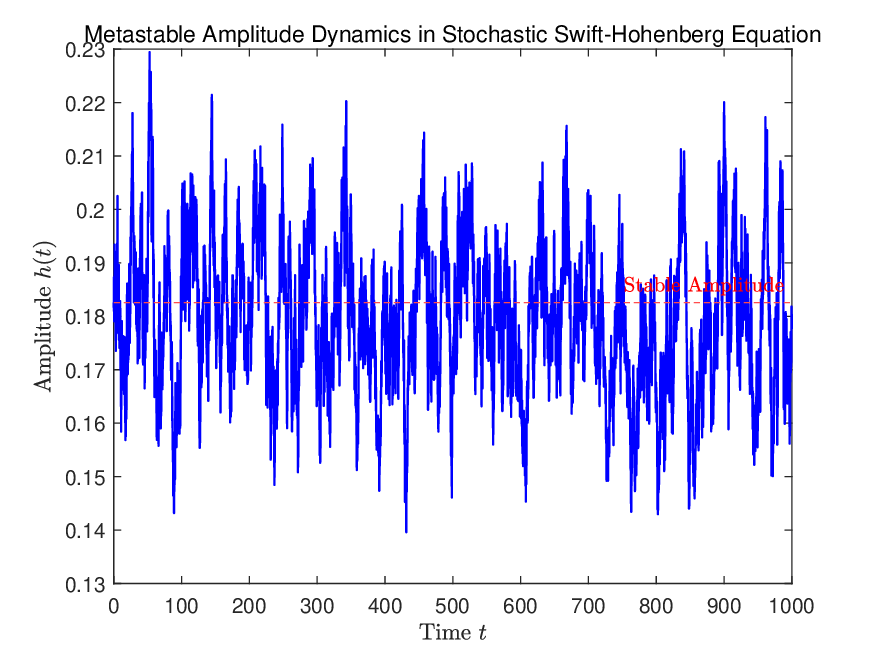}
\caption{Trajectory of $h(t)$ from \eqref{ISH}, the amplitude fluctuates near $h_{\ast}\approx0.183$ (dashed line), with rare escapes consistent with exponential metastability; Bifurcation parameter $\delta=0.1$; Noise intensity $\varepsilon=0.05$; Initial condition (stable equilibrium) $h_0=\sqrt{\delta/3}$; Total time $T=10^3$; Time step $dt=0.1$.}\label{SH}	
\end{figure}

\section{Conclusions and future challenges}\label{CF}
This work established an equivalence between the original stochastic partial differential equation and a coupled system comprising a finite-dimensional stochastic differential equation for the parameter $h$ on an approximate slow manifold and an infinite-dimensional evolution equation for the orthogonal component $v$. We provided a rigorous framework for reducing infinite-dimensional SPDEs to finite-dimensional SDEs on approximate slow manifolds, leveraging stochastic calculus and geometric considerations. Furthermore, we applied our results to the effective dynamics of stochastic models with multiplicative white noise, illustrated through four examples: the stochastic damped wave equation, the stochastic Allen-Cahn equation, the stochastic nonlinear Schr\"{o}dinger equation, and the stochastic Swift-Hohenberg equation.

While progress has been made in simplifying interface dynamics for multiplicative-noise-driven SPDEs, future work must address broader noise types, higher-dimensional complexity, and rigorous mathematical underpinnings to advance both theory and applications.

Singular limits, such as rigorously justifying dimension reduction procedures (e.g., vanishing interface width in sharp-interface approximations), demand precise analysis to bridge formal asymptotic expansions with stochastic calculus. Non-Markovian effects further complicate dynamics, requiring advanced tools to handle memory in systems driven by colored noise or fractional Brownian motion, where traditional It\^{o} calculus falls short. Meanwhile, extending results to high-dimensional systems, such as three-dimensional interfaces governed by curvature-driven motion or evolving under topological changes, introduces geometric and computational hurdles, particularly in preserving structure during discretization. Additionally, the influence of non-Gaussian noise (e.g., L\'evy flights or jump processes) on interface motion remains underexplored, necessitating new frameworks to characterize heavy-tailed fluctuations and discontinuous paths. These problems not only push the boundaries of stochastic analysis but also have profound implications for applications ranging from materials science to biological systems, where predictive modeling relies on reconciling rigorous theory with real-world complexity.

The study of stochastic interface dynamics in nonlinear SPDEs driven by multiplicative noise faces several future  challenges. First, extending theoretical frameworks to accommodate spatially or temporally correlated (colored), non-Gaussian (e.g., L\'evy), or degenerate noise is essential for modeling real-world systems where noise lacks Markovian or Gaussian structure, such as turbulent environments or fracture propagation in heterogeneous materials. Addressing curvature-driven dynamics, topological bifurcations (e.g., merging or pinching of interfaces), and complex geometries in higher dimensions demands advances in geometric stochastic calculus to handle evolving manifolds and singularities. Rigorous convergence proofs for reduced models require modern tools like regularity structures and paracontrolled distributions, alongside resolving renormalization challenges for singular interfaces with divergent energy densities. Strongly nonlinear regimes, where multiplicative noise couples non-trivially with high-order terms or interacts with conservation laws (e.g., mass/energy), necessitate novel analytical techniques to disentangle stochastic and deterministic feedback. The role of domain geometry, boundary conditions (e.g., rough or moving boundaries), and material heterogeneities in shaping interface dynamics remains underexplored, particularly in stochastic settings. Computationally, scalable algorithms for both the original SPDEs and their effective equations are urgently needed to bridge the gap between theoretical insights and real-world applications, such as predicting crack propagation in alloys or biofilm growth. A deeper understanding of how multiplicative noise selectively triggers or suppresses interfacial instabilities-such as fingering in multiphase flows or phase transitions in active matter will unify theory with experimental observations in materials science, developmental biology, climate change and  fluid dynamics. Ultimately, synthesizing mathematical rigor with empirical validation across disciplines will be pivotal in transforming abstract models into predictive tools for engineering and natural systems.

\section*{Acknowledgments}

 \noindent The authors are happy to thank Prof. Martin Hairer for the  insightful and inspiring discussions on interfaces.  This work was supported by Guangdong basic and applied basic research foundation 2025A1515012560.

\end{document}